\DeclareMathOperator{\Vol}{Vol}
\DeclareMathOperator{\inorm}{in}
\title{Determinant of the finite volume {L}aplacian}
\author{Thomas Doehrman\and
David Glickenstein}
\thanks{TD and DG partially supported by NSF DMS 1760538. DG partially supported by NSF CCF 1740858 and NSF DMS 1937229}
\begin{document}

\begin{abstract}
    The finite volume Laplacian can be defined in all dimensions and is a natural way to approximate the operator on a simplicial mesh. In the most general setting, its definition with orthogonal duals may require that not all volumes are positive; an example is the case corresponding to two-dimensional finite elements on a non-Delaunay triangulation. Nonetheless, in many cases two- and three-dimensional Laplacians can be shown to be negative semidefinite with a kernel consisting of constants. This work generalizes work in two dimensions that gives a geometric description of the Laplacian determinant; in particular, it relates the Laplacian determinant on a simplex in any dimension to certain volume quantities derived from the simplex geometry. 
\end{abstract}

\keywords{Laplacian, finite volume, determinant}
\subjclass[2020]{51M05, 52M04, 65N08}
\maketitle

\newcommand{\R}{\mathbb{R}}
\newcommand{\norm}[1]{\left\lVert#1\right\rVert}

\newtheorem{theorem}{Theorem}
\newtheorem{acknowledgement}[theorem]{Acknowledgement}
\newtheorem{algorithm}[theorem]{Algorithm}
\newtheorem{axiom}[theorem]{Axiom}
\newtheorem{case}[theorem]{Case}
\newtheorem{claim}[theorem]{Claim}
\newtheorem{conclusion}[theorem]{Conclusion}
\newtheorem{condition}[theorem]{Condition}
\newtheorem{conjecture}[theorem]{Conjecture}
\newtheorem{corollary}[theorem]{Corollary}
\newtheorem{criterion}[theorem]{Criterion}
\newtheorem{definition}[theorem]{Definition}
\newtheorem{example}[theorem]{Example}
\newtheorem{exercise}[theorem]{Exercise}
\newtheorem{lemma}[theorem]{Lemma}
\newtheorem{notation}[theorem]{Notation}
\newtheorem{problem}[theorem]{Problem}
\newtheorem{proposition}[theorem]{Proposition}
\newtheorem{remark}[theorem]{Remark}
\newtheorem{solution}[theorem]{Solution}
\newtheorem{summary}[theorem]{Summary}

\section{Introduction}
The finite volume Laplacian is an important object in the study of numerical partial differential equations. It arises, for instance, by considering the approximation of functions on a triangulation by piecewise constant functions with support on control volumes associated to the vertices (see, e.g., \cite{lichenwu}). This type of Laplacian also arises naturally in variation formulas of discrete curvatures with respect to circle packing and other types of discrete conformal structures, both in two and three dimensions, e.g.,  \cite{he, chowluo, glickenstein1, glickenstein4}. In two dimensions, finite element Laplacians can be considered a special case of finite volume Laplacians, where the control volumes arise from circumcentric dual vertices \cite{bankrose}.

It is notable that finite volume Laplacians make sense even when control volumes are negative or have negative contributions, as we see in Section \ref{sec:lap}. These arise naturally as variations of discrete conformal structures in dimensions two and three, and also arise naturally from two-dimensional finite elements on non-Delaunay triangulations. In many such cases, the Laplacian is still of maximal rank with nonpositive eigenvalues, even though the Laplacian is no longer a graph Laplacian with positive weights because some edge weights are negative.

In \cite[Section 5]{glickenstein3}, it is shown how to relate the Laplacian determinant of a single triangle to the ratio of the area of the pedal triangle of the triangle center to the area of the triangle. This demonstrates that many finite volume Laplacians have full rank, regardless of the positivity of edge weights. This article generalizes the formula for the Laplacian determinant to all dimensions. Some special cases of these were studied in other ways in the works \cite{glickenstein2, hexu}. 

\section{Preliminaries}
\subsection{Duality structures on simplices and triangulation}\label{sec:duals}
In order to properly define finite volume Laplacians, we need to have a structure to separate space into pieces related to duals of the simplex. Often it is assumed that the finite volume decomposition produces subdivisions of each simplex with all positive volumes, often called ``well-centered'' (e.g., \cite{hirani2003}) or that the dual volumes all have nonnegative generalized areas, a property that in two dimensions is equivalent to the Delaunay or weighted Delaunay condition \cite{glickenstein2005geometric}. In this work, we choose not to make an assumption of nonnegative generalized volumes on dual cells. For instance, there is a finite volume Laplacian corresponding to a two-dimensional finite element structure (see, e.g., \cite{bankrose}) from a non-Delaunay triangulation will have some pieces of the decomposition without positive generalized volume.

In order to make sense of positive/negative generalized volumes, we give a formal definition of duality structures. These ideas are well developed in \cite{hirani2003,desbrunetal2005, glickenstein2005geometric,Schneiderpolytopes2020}. We begin with the combinatorial description of simplices as ordered numbers such as $[0,1,2]$ as described in any algebraic topology text (e.g., \cite{hatcher}). Choosing vertices $v_0, v_1, \dots v_n$ in some Euclidean space $\R^n$ gives a geometry to that simplex, with lengths of edges and volumes of subsimplices. 
Dual cells are constructed from the Poincar\'e duals.
The geometry of dual cells can be constructed for a single simplex by projecting a chosen center onto each face. 
Often we will refer only to length, area, and volume rather than ``generalized'' length, area, and, volume. We will sometimes call the generalized volume of dual cells ``dual volumes.''

\subsubsection{Dual lengths in a triangle} \label{sec:duals2D}
We begin in two dimensions; refer to Figure \ref{fig:dualitytriangle}. Given a Euclidean triangle embedded in $\R^2$ with vertices $\{ v_0, v_1, v_2\}$ and any point $C_{[0,1,2]} \in \R^2$ (called the ``center'' of the face $[0,1,2]$), one can define a center for each edge by orthogonal projection onto the lines through edges. The center of edge $[i,j]$,  denoted $C_{[i,j]}$, is given for any triangle $[i,j,k]$ by:
\[
    C_{[i,j]} = \frac{(v_j - v_i) \cdot (C_{[i,j,k]} - v_i)}{\norm{v_j - v_i}^2} (v_j - v_i) + v_i.
\]

From here we can define the dual length $\ell_{ij,k}^*$ associated with edge $[i,j]$ in triangle $[i,j,k]$ as the signed distance between $C_{[i,j,k]}$ and $C_{[i,j]}$
\[
    \ell_{ij,k}^* = \pm \norm{C_{[i,j]} - C_{[i,j,k]}} 
\]
where $\ell_{ij,k}^* > 0$ when $C_{[i,j,k]}$ lies on the same side of $[i,j]$ as the realization of the triangle $[i,j,k]$. Another way to define this with a more clear description of the sign is to define the inward normal 
\[ \inorm_{ij} = (v_k - v_i)-\frac{(v_k - v_i)\cdot (v_j - v_i)}{\norm{v_j - v_i}^2}(v_j-v_i)
\]
and then 
\[
    \ell_{ij,k}^* = (C_{[i,j,k]}-C_{[i,j]})\cdot \inorm_{ij}/\norm{\inorm_{ij}}.
\]

\begin{figure}[ht]
    \centering
    \includegraphics[width=.49\textwidth]{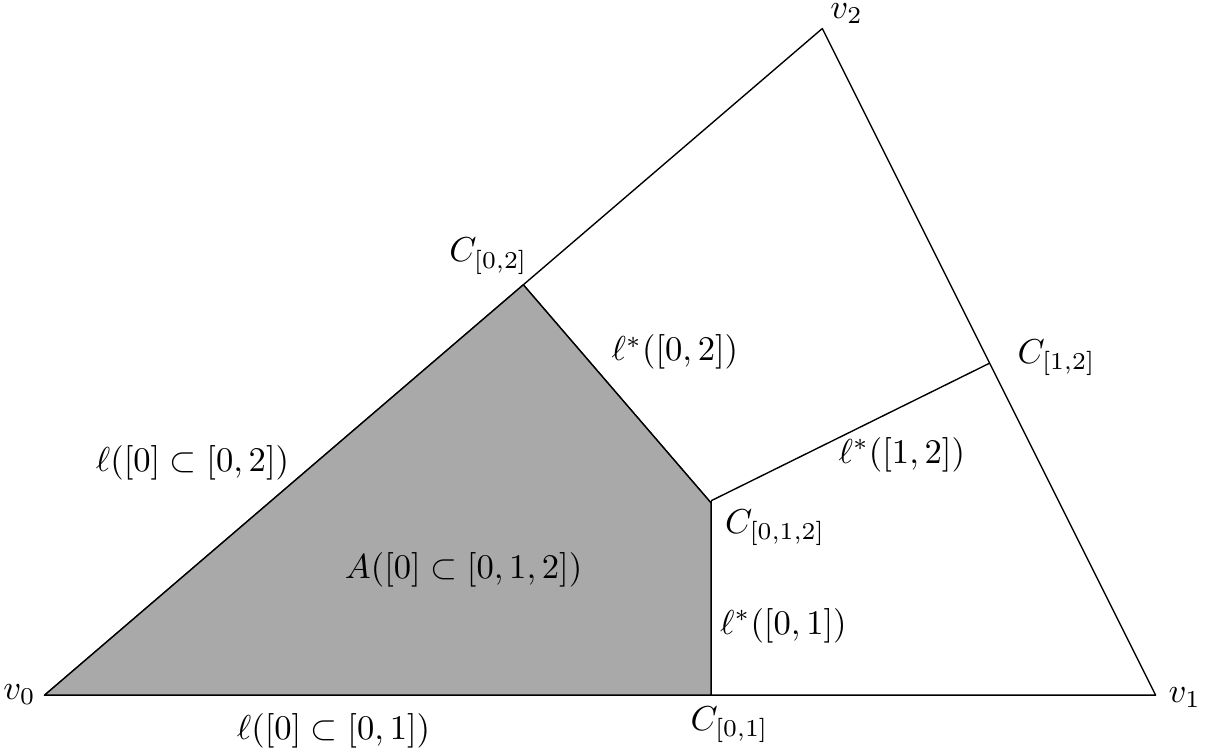}
    \includegraphics[width=.49\textwidth]{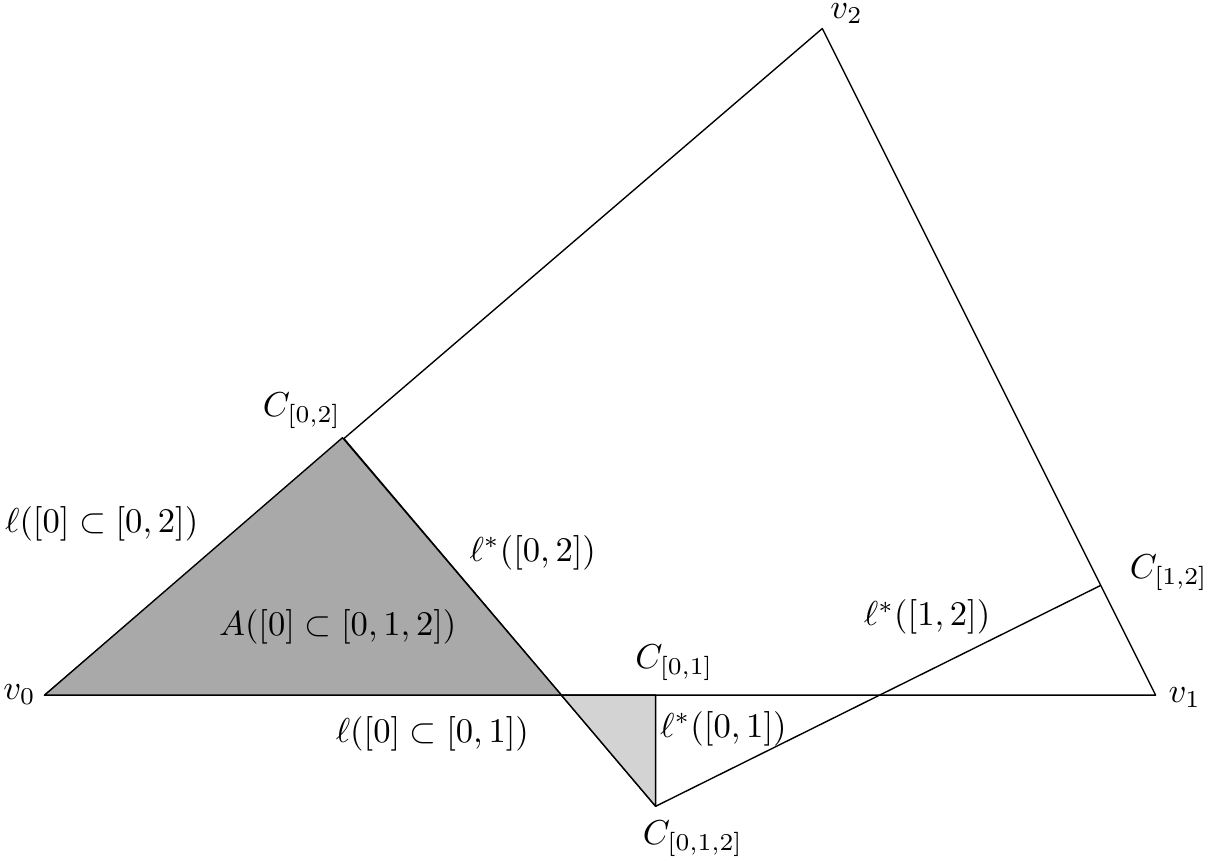}
    \caption{Duality structures on triangles $[0,1,2]$ with center $C_{[0,1,2]}$ inside (left) and outside (right) the triangle.}
    \label{fig:dualitytriangle}
\end{figure}

Note that the centers also divide up the primal simplices. In particular, each edge $[i,j]$ is divided into 
one piece joining $v_i$ and $C_{[i,j]}$  and another piece joining $v_j$ and $C_{[ij]}$, resulting in signed lengths $\ell([i] \subset [i,j])$ and $\ell([j] \subset [i,j])$ with the property that 
\[
    \ell([i] \subset [i,j]) + \ell([j] \subset [i,j]) = \ell([i,j]).
\]
These can be seen in Figure \ref{fig:dualitytriangle}. In \cite{glickenstein2005geometric} and other of the second author's work, typically $\ell([i] \subset [i,j])$ is denoted as $d_{ij}$. Note that in the notation $\ell(\tau \subset \sigma)$ and similar quantities, the value depends on both $\tau$ and $\sigma$ but is written in this way to emphasize which simplex is a subsimplex of the other.

This structure provides a way to divide other pieces of simplices into parts. Notably, we have $A([i] \subset [i,j,k])$, which is the piece of area associated to vertex $i$, as seen in Figure \ref{fig:dualitytriangle}. Note that in the right picture, the heavy gray piece is considered positive area and the light gray piece is considered negative area, with the two signed contributions added together to determine $A([0]\subset [0,1,2])$.

While dual volumes can also be defined for faces and vertices, the finite volume Laplacian is related specifically to the dual volumes of codimension one (i.e. volumes associated with edges in the original simplex). It will be helpful to illustrate how this process works in three dimensions before generalizing to dimension $N$.

\subsubsection{Dual areas in a tetrahedron}
Refer to Figure \ref{fig:dualitytetra}. Let $\{v_0, v_1, v_2, v_3\}$ be the vertices of a Euclidean tetrahedron, and pick a center $C_{[0,1,2,3]} \in \R^3$. For any simplex $\sigma \subset [0,1,2,3]$, let $C_\sigma$ denote the orthogonal projection of $C_{[0,1,2,3]}$ onto the line or plane containing $\sigma$. For two nested simplices $\tau \subset \sigma$, we define dual lengths $\ell^*({ \tau \subset \sigma})$ by

\[
    \ell^*({ \tau \subset \sigma}) = \pm \norm{C_{\sigma} - C_{\tau}}
\]
where $\ell^*( \tau \subset \sigma) > 0$ when $C_\sigma$ lies on the same side of $\tau$ as $\sigma$.

Then the dual area associated with the edge $[i,j]$ in $[i,j,k]$ is given by
\[
    A^*([i,j]) = \sum_{k,\ell:[i,j] \subset [i,j,k] } \frac{\ell^*\left({ [i,j] \subset [i,j,k]}\right)\ell^*\left({ [i,j,k] \subset [i,j,k,l]}\right)}{2}.
\]
Note that $A^*([i,j])$ is the sum of the signed areas of two right triangles with vertices $\{C_{[i,j]}, C_{[i,j,k]}, C_{[i,j,k,l]}\}$ and $\{C_{[i,j]}, C_{[i,j,l]}, C_{[i,j,k,l]}\}$. We are now ready to generalize to $N$-dimensions.

\begin{figure}[ht] 
    \centering
    \includegraphics[width=.95\textwidth]{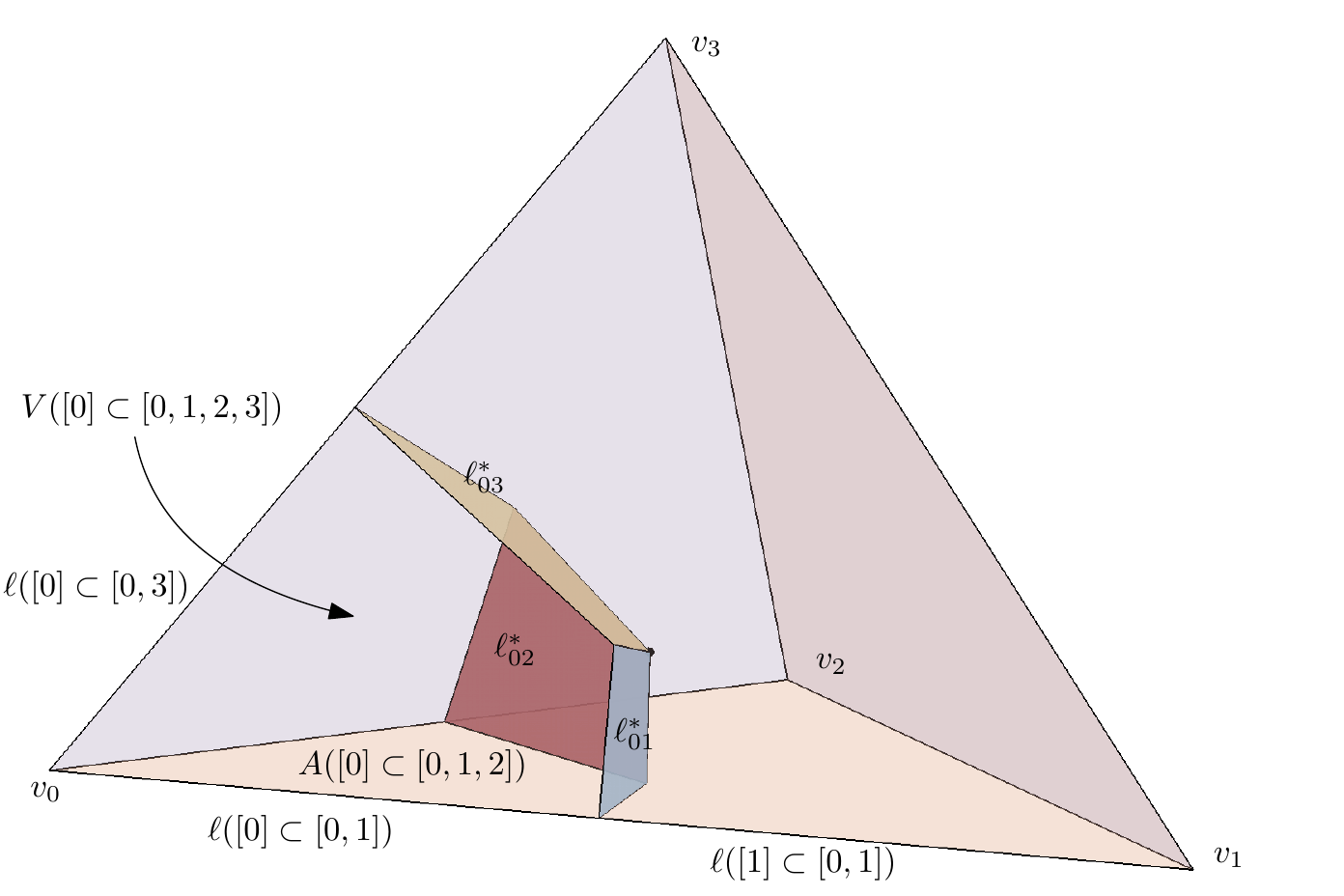}
    \caption{View of the dual structure in a tetrahedron. Note that $\ell^*_{ij}=\ell^*([i,j]\subset [0,1,2,3])$ as described in Remark \ref{rmk:lij}.}
    \label{fig:dualitytetra}
\end{figure}

\subsubsection{Dual volumes in an \texorpdfstring{$N$}{TEXT}-simplex} \label{sec:dualnd}

Let $\{v_k\}_{k = 0}^N \subset \R^N$ be the vertices of a Euclidean $N$-simplex $T$, and pick a center $C_{T} \in \R^N$. For any simplex $\sigma \subset T$ with dimension at least one, let $C_\sigma$ denote the orthogonal projection of $C_{T}$ onto the hyperplane containing $\sigma$ of the same dimension as $\sigma$. For two nested simplices $\tau \subset \sigma$, we define dual lengths $\ell^*({ \tau \subset \sigma})$ just as in three dimensions:
\[
    \ell^*({ \tau \subset \sigma}) = \pm \norm{C_{\sigma} - C_{\tau}}
\]
where $\ell^*( \tau \subset \sigma) > 0$ when $C_\sigma$ lies on the same side of $\tau$ as $\sigma$. In Section \ref{sec:duals2D} we used the notation $\ell_{ij,k}^*$ more succinctly to represent $\ell^*([i,j] \subset [i,j,k])$. 

The dual volume associated with the edge $[i,j]$ is given by
\[
    V^*([i,j]\subset T) = \frac{1}{(N-1)!}\sum_{[i,j] = \sigma_1 
    \subset \cdots \subset \sigma_N = T } \prod_{n = 1}^{N-1}\ell^*\left({ \sigma_n \subset \sigma_{n+1}}\right)
.\]
$V^*([i,j]\subset T)$ is the sum of the signed volumes of $(N-1)!$ orthoschemes of dimension $N-1$, each with vertices $\{ C_{\sigma_1}, C_{\sigma_2}, \ldots , C_{\sigma_N} \}$ for some nested collection of simplices $[i,j] = \sigma_1 \subset \sigma_2 \subset \cdots \subset \sigma_N = T $. For a description of how signed volumes of orthoschemes add up to the volume of a simplex, see \cite{Schneiderpolytopes2020}.

We will sometimes use $A$ to denote $2$-dimensional volumes or to denote an $N-1$ dimensional volume in a $N$-dimensional simplex to emphasize the difference between the two types of volumes. In particular, for a simplex $\sigma = [0,1,\ldots,N]$ we may write $A_{\widehat{i}}$ to denote the $(N-1)$-dimensional volume of simplex $\sigma_{\widehat{i}} = [0,1,\ldots,\widehat{i}, \ldots, N]$, where the hat denotes the vertex is missing. A quantity that will be important in the sequel is the area associated to a vertex, 
\begin{equation}
    A_{j,\widehat{i}} = A([j] \subset \sigma_{\widehat{i}}) = \frac1N\sum_{k\neq i,j}\ell([j] \subset [j,k]) \ell^*([j,k]\subset \sigma_{\widehat{i}})
    \label{eq:Aji}
\end{equation}
if $i\neq j$ and $A_{i,\widehat{i}} = 0$ for all $i$. In dimension 3, we may use $A_{i,jk}$ to denote $A_{i,\widehat{\ell}}$ in a simplex $[i,j,k,\ell]$.

\begin{remark}\label{rmk:lij}
In the rest of the paper, we will mostly be interested in dual areas of the form $\ell^*([i,j] \subset \sigma)$ where $\sigma$ is a simplex containing $[i,j]$. In this case, we will often write $\ell^*([i,j])$ or $\ell^*_{ij}$ instead for brevity, despite the possibility of confusion when we have a triangulation as described in the next section.
\end{remark}


\subsubsection{Remarks on triangulations}

In order to produce a duality structure on a triangulation from duality structures on individual simplices, it is necessary to choose centers in such a way that dual lengths agree on intersections of neighboring simplices. Given two $N$-simplices $T$ and $S$ in a triangulation whose intersection contains an edge $e$, consider a local Euclidean embedding of the triangulation whose domain contains both $T$ and $S$. Within that embedding, let $C(e \subset T)$ and $C(e \subset S)$ denote the centers assigned to the edge $e$ by the simplices $T$ and $S$ respectively.
If $C(e \subset T) = C(e \subset S)$ for every such $T$, $S$, and $e$, then it can be shown that faces of higher dimension will also be assigned consistent centers \cite{glickenstein2005geometric}. This is equivalent to expressing the fact that the lengths and dual structures are entirely determined by the quantities $\ell([i] \subset [i,j])$, and this parametrization is explored specifically in \cite{glickenstein2005geometric, glickensteinthomas}. That is, for any simplices $\sigma \subset T \cap S$, we have that $C(\sigma \subset T) = C(\sigma \subset S)$. This condition ensures that the dual volumes described in Section \ref{sec:dualnd} may be pieced together to form local geometric realizations of the cells of the Poincar\'e dual to the original triangulation. Further, for any face $\sigma$ in the original triangulation, the linear space spanned by the realization of $\sigma$ is orthogonal to the linear space spanned by the realization of its Poincar\'e dual. The volume of the dual to a simplex can be defined in general as 
\[
    V^*(\sigma) = \sum_{\sigma^N} V^*(\sigma \subset \sigma^N),
\]
where the sum is over all top dimensional simplices.

\subsection{Graph Laplacians}\label{sec:lap}
We define the weighted graph Laplacian on a graph $G$ with weights $w_{ij}$ as the operator on functions $f:V \to \R$ given by
\[
    L f_i = \sum_{ij\in E} w_{ij} (f_j - f_i).
\]
The background for weighted graph Laplacians can be found in the books by Chung \cite{chung} and Bollob\'{a}s \cite[Chapter II.3]{bollobas}. When the weights are positive, the Laplacian is negative semidefinite with zero eigenvalue precisely on the space spanned by $f_i=1$ for all vertices $i$ on a connected component. This follows because the matrix $L$ is diagonally dominant. 

In this paper, we will not require the weights to be nonnegative, but we will consider Laplacians that are determined by dual structures such that 
\[
    w_{ij} = \frac{\ell^*_{ij}}{\ell_{ij}}.
\]
For this reason, we do not automatically know that the Laplacian has maximal rank or nonpositive eigenvalues, as we do for Laplacian matrices with positive weights. To study this, we will consider the Laplacian determinant on each simplex. The Laplacian determinant, or Kirchoff determinant, can be defined in general on a weighted graph as follows.
\begin{definition} \label{def:K}
The \emph{Laplacian determinant} is 
\begin{equation}\label{K*}
K^*(G)=\det (-\widehat{L}_{00}),
\end{equation}
where $\widehat{L}_{00}$ denotes the Laplacian matrix with the first row and first column removed. 
\end{definition}
By considering the adjugate of the Laplacian matrix, it is easy to see that $K^*(G)=(-1)^{i+j} \widehat{L}_{ij}$ for any choice of $i$th row and $j$th column removed, and that it is the product of all eigenvalues except the first eigenvalue, which is zero. For more, see \cite{chung,bollobas}. Additionally, a generalization of the Matrix Tree Theorem shows that the value can be computed by summing over all spanning trees the product of the weights in that tree (see \cite{duvaletal2009} for this and generalizations).

\subsection{Volumes of simplices}
We will also need some basics for volumes of polyhedra. The usual way to calculate signed volumes of simplices is by considering the vectors emanating from a vertex (see, e.g., \cite{stein66}), i.e., 
\begin{align}
 \Vol(\sigma) &= \frac{(-1)^N}{N!} \det \left[ \begin{array}[c]{ccccc} 
    v_1-v_0 & v_2-v_0 & \cdots & v_N-v_0 \end{array} 
    \right] \label{eq:volume1}\\ 
    &= \frac{(-1)^N}{N!} \det \left[ \begin{array}[c]{ccccc} 
    v_0 & v_1 & \cdots & v_N\\
    1 & 1& \cdots & 1\end{array} 
    \right] \label{eq:volume2},
\end{align}
if $\sigma$ is the simplex determined by the vertices $v_0, v_1, \ldots, v_N$ in $\R^N$, considered as column vectors, where the ordering determines the sign of the volume according to rules of determinants.

In addition, we can see that the volume can be determined by the normals and areas of the faces. While it follows from Minkowski's Theorem that a polytope is uniquely determined up to translation by the normals and areas of the faces (see, e.g., \cite{alexandrov,schneiderconvexbodies}), it is much more straightforward in the case of simplices. While we expect that these arguments are well-known, we have not found a reference. The work was inspired by the derivation of the cosine laws for the sphere and hyperbolic space in \cite{Thurston97}. 


\begin{proposition}\label{prop:detnormal}
Let $n_{\widehat{0}}, n_{\widehat{1}}, \ldots, n_{\widehat{N}}$ denote the outward pointing normals to an $N$-simplex $\sigma$ with vertices $v_0,\ldots,v_n$ in $\R^{N}$, where $n_{\widehat{j}}$ is orthogonal to the plane containing all vertices except $v_j$, and such that the length of $n_{\widehat{j}}$ is equal to the area of the face containing all vertices except $v_j$. Then for any $j=0,\ldots,N$,
\begin{equation}
    \det\left[ \begin{array}[c]{cccccc} 
   n_{\widehat{0}} & n_{\widehat{1}} &\cdots& \widehat{n_{\widehat{j}}} &\cdots & n_{\widehat{N}}
   \end{array}
   \right]  =  \frac{(-1)^{N+j}N^{N}}{N!}\Vol(\sigma)^{N-1}\label{eq:volfromnormals}
\end{equation}
where the hat denotes the vector is not present.
\end{proposition}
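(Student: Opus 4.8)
The plan is to express the matrix of outward normals in terms of the vertex coordinates and reduce the claim to a computation involving the Cayley–Menger-style determinant or, more directly, to exploit the relationship between the normal vectors and the cofactors of the vertex matrix. First I would recall the standard fact that the outward normal $n_{\widehat{j}}$ (scaled to have length equal to the area of the opposite face) can be written explicitly via generalized cross products: if $M$ is the $N\times(N+1)$ matrix whose columns are $v_0,\ldots,v_N$ augmented with a row of ones as in \eqref{eq:volume2}, then each $n_{\widehat{j}}$ is, up to sign, the vector of $N\times N$ minors obtained by deleting column $j$ and expanding. Concretely, the components of $n_{\widehat{j}}$ are the signed cofactors of the augmented matrix, so that $n_{\widehat{j}}\cdot(v_k - v_m)$ and $\sum_j n_{\widehat{j}} = 0$ (the closing-up condition for a simplex) both hold; the latter is Minkowski's relation for a simplex and will be the organizing identity.

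The key steps, in order, are as follows. First I would set up coordinates using the augmented $(N+1)\times(N+1)$ vertex matrix $V$ from \eqref{eq:volume2}, whose determinant gives $(-1)^N N!\Vol(\sigma)$. Second, I would identify $n_{\widehat{j}}$ with a column built from the signed maximal minors of $V$ with column $j$ removed: the $i$th component of $n_{\widehat{j}}$ is $(-1)^{\text{something}}$ times the minor of $V$ obtained by deleting column $j$ and the row corresponding to coordinate $i$ (keeping the all-ones row). This makes the target determinant in \eqref{eq:volfromnormals}, namely the determinant of the $N\times N$ matrix whose columns are all the $n_{\widehat{k}}$ for $k\neq j$, into a determinant of minors of $V$. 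Third, I would invoke a Jacobi-type / Sylvester identity (the classical relation expressing a determinant of complementary minors of a matrix in terms of a power of the full determinant) to collapse this determinant of cofactors into $\det(V)^{N-1}$ times an explicit sign and combinatorial factor. Finally I would track all the signs and the factor of $N^N/N!$ carefully to match the right-hand side, noting that $\Vol(\sigma)^{N-1}$ arises precisely because we take an $N\times N$ determinant of objects each linear in the cofactors of $V$, and each cofactor carries one factor of a degree-$N$ polynomial in the vertices, giving total degree $N(N-1)$ matching $\Vol^{N-1}$.

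The main obstacle I anticipate is bookkeeping the signs and normalization rather than any conceptual difficulty. Two separate sign conventions must be reconciled: the sign making $n_{\widehat{j}}$ outward-pointing (which depends on the orientation induced by the vertex ordering), and the sign in the cofactor/minor expansion; getting the factor $(-1)^{N+j}$ exactly right, uniformly in $j$, is where I expect to spend the most effort. I would control this by first proving the identity for a single convenient value of $j$ (say $j=N$, where the omitted column is last and the signs are cleanest), verifying it against a low-dimensional base case such as $N=2$ using the explicit $\inorm$ and normal formulas from Section~\ref{sec:duals2D}, and then using the alternating behavior of the determinant under column permutations together with the Minkowski relation $\sum_j n_{\widehat{j}}=0$ to propagate the result to all $j$ while correctly generating the factor $(-1)^{N+j}$. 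An alternative route, should the Jacobi-identity computation prove cumbersome, is to argue more geometrically: both sides are polynomials in the $v_k$, both transform the same way under the affine group acting on the simplex, and they agree on a spanning set of configurations (for instance the standard orthogonal simplex and its images), so they must coincide identically; this avoids explicit minor manipulations at the cost of a separate degree and invariance check.
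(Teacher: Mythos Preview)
Your approach is correct in outline and would lead to a valid proof, but the paper takes a more direct route that you may find instructive. Rather than identifying each $n_{\widehat{j}}$ with a column of cofactors of the augmented vertex matrix $V$ and then invoking a Jacobi--Sylvester identity to evaluate the determinant of a submatrix of $\operatorname{adj}(V)$, the paper simply writes down the matrix product relation
\[
\begin{bmatrix} n_{\widehat{1}}^T \\ \vdots \\ n_{\widehat{N}}^T \end{bmatrix}
\begin{bmatrix} v_1 - v_0 & \cdots & v_N - v_0 \end{bmatrix}
= -N\Vol(\sigma)\, I_N,
\]
which holds because $n_{\widehat{i}}\cdot(v_k-v_0)=0$ for $k\neq i$ (both $v_k,v_0$ lie on the face opposite $v_i$) and $n_{\widehat{i}}\cdot(v_i-v_0)=-h_i\,|n_{\widehat{i}}|=-N\Vol(\sigma)$ by the base-times-height formula. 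Taking determinants of both sides and using \eqref{eq:volume1} for the second factor gives the $j=0$ case immediately; the Minkowski relation $\sum_j n_{\widehat{j}}=0$ then propagates to general $j$ with the sign $(-1)^{N+j}$, exactly as you outlined in your last step. The paper also records an augmented $(N{+}1)\times(N{+}1)$ version of this product that handles all $j$ at once.

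Conceptually the two arguments are close cousins: your cofactor identification is precisely the statement that the normal matrix is (a block of) the adjugate of $V$, and the paper's product identity is just $V\cdot\operatorname{adj}(V)=\det(V)I$ read in the other direction. What the paper's route buys is that all the sign and normalization bookkeeping you flagged as the main obstacle collapses into the single explicit diagonal $-N\Vol(\sigma)I_N$, so there is nothing to track. What your route buys is a clearer connection to the classical Jacobi minor identity, which may generalize more readily to other settings; but for this specific statement the paper's two-line matrix product is decidedly cleaner.
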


\begin{proof}
We see that 
\begin{align}
    \left[ \begin{array}[c]{cc} 
    n_{\widehat{0}}^T & N\Vol(\bar{\sigma}_{\widehat{0}} )\\[.5em] n_{\widehat{1}}^T & N\Vol(\bar{\sigma}_{\widehat{1}})\\ \vdots &\vdots\\ n_{\widehat{N}}^T & N\Vol(\bar{\sigma}_{\widehat{N}})\end{array} 
    \right] 
    \left[ \begin{array}[c]{ccccc} 
    v_0 & v_1 & \cdots & v_N\\
    1 & 1& \cdots & 1\end{array} 
    \right] 
    = N \Vol(\sigma) I_N,  
\label{eq:normalvertexinverse}
\end{align}
where $\bar{\sigma}_{\widehat{j}}$ denotes the $N-$simplex with vertices the origin and the $(N-1)-$simplex excluding vertex $j$, and $\Vol$ denotes signed volume. This follows from the fact that $n_{\widehat{j}}^T v_j = -N \Vol(\bar{\sigma}_{\widehat{j}})$ for each $j$. Notice that $\sum_{j=0}^{N} \Vol(\bar{\sigma}_{\widehat{j}}) = \Vol(\sigma)$. Using the fact that $\sum n_{\widehat{i}}=0$, we see that the determinant on the left side of (\ref{eq:volfromnormals}) does not depend on $j$ except for the sign. It follows by expanding in the last column that the determinant of the leftmost matrix in (\ref{eq:normalvertexinverse}) is equal to 
\begin{align}
(-1)^{N+j}\det\left[ \begin{array}[c]{cccccc} 
   n_{\widehat{0}} & n_{\widehat{1}} &\cdots& \widehat{n_{\widehat{j}}} &\cdots & n_{\widehat{N}}
   \end{array}
   \right]N \Vol(\sigma) \label{eq:normaltimesvertex}
\end{align}
for any $j$. The second matrix on the left has determinant equal to $N!\Vol(\sigma)$. Finally, the matrix on the right has determinant equal to $N^{N+1}\Vol(\sigma)^{N+1}$.

Here is an alternative, but similar proof.
Since $n_{\widehat{0}}+n_{\widehat{1}}+\cdots+n_{\widehat{N}}=0$, it is sufficient to prove this for $j=0$. We see that 
\[
    \left[ \begin{array}[c]{c} 
    n_{\widehat{1}}^T \\[.5em] n_{\widehat{2}}^T \\ \vdots \\ n_{\widehat{N}}^T \end{array} 
    \right] 
    \left[ \begin{array}[c]{ccccc} 
    v_1-v_0 & v_2-v_0 & \cdots & v_N-v_0 \end{array} 
    \right] 
    = -N \Vol(\sigma) I_N
\]
where $I_N$ is the $N \times N$ identity matrix.
The determinant of the second matrix on the left is equal to $(-1)^N N!\Vol(\sigma)$ and so taking determinants and solving, we complete the proof.
\end{proof}

\section{Formula for the Laplacian determinant}

In this section we calculate the Laplacian determinant on a simplex, which gives the following theorem.
\begin{theorem} \label{thm:determinant}
Let $T=[0,1,\ldots, N]$ be a $N$-simplex realized with vertices $v_0, \ldots, v_N \in \mathbb{R}^N$ with duality structure determined by $C_T\in \R^N$. The Laplacian determinant $K^*\left(  T\right)$ is defined as in Definition \ref{def:K}. Let the dual simplex $T^\#$ be the simplex determined by the normals $n_{i} = \sum_j n_{ij}$ where 
\begin{align}
   n_{ij} = \frac{\ell^*_{ij}}{\ell_{ij}} (v_j - v_i), \label{def:n}
\end{align}
and let $\widehat{n}$ be the matrix with columns $n_1,\ldots, n_N$. 
Then
\begin{align}
    K^*\left(  T\right) = -\frac{\det (\widehat{n})}{6\Vol(T)}.
\end{align}
Furthermore, we have 
\begin{align}
K^*\left(  T\right) = \frac{N^{N}\Vol(T^{\#})^{N-1}}{\left(N!\right)^2\Vol(T)}. \label{eq:keqVoverV}
\end{align}

Finally, let $A_{\widehat{i}}$ equal the volume of $\sigma_{\widehat{i}}$ and $A_{j,\widehat{i}}$ denote the signed volume of the piece of $\sigma_{\widehat{i}}$ associated to vertex $j$ as in (\ref{eq:Aji}). Let $A$ be the matrix whose $ij$th elements are $A_{j,\widehat{i}}$. Then 
\begin{align}
    K^*\left(  T\right) = \frac{ (-1)^N N^N  \Vol(T)^{N-2}\det(A)}{(N!)^2\prod_{i=0}^N A_{\widehat{i}}}.
\label{eq:KandA}
\end{align}
\end{theorem}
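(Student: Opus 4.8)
The plan is to factor the matrix of dual-simplex normals through the vertex-area matrix $A$ and then combine a Cauchy--Binet expansion with Proposition~\ref{prop:detnormal}. First I would record the bridge to the reduced Laplacian: writing $u_j=v_j-v_0$ and $U=[\,u_1\ \cdots\ u_N\,]$, the row-sum identity $\sum_j L_{ij}=0$ gives $n_i=\sum_j L_{ij}v_j=\sum_{j\ge 1}(\widehat L_{00})_{ij}\,u_j$ for $i\ge 1$, so that $\widehat n=U\,\widehat L_{00}$. Since $\det U=(-1)^N N!\Vol(T)$ by (\ref{eq:volume1}) and $K^*(T)=\det(-\widehat L_{00})=(-1)^N\det\widehat L_{00}$, this already yields the first displayed formula in the form $K^*(T)=\det(\widehat n)/(N!\,\Vol(T))$, up to the orientation and sign conventions fixing the statement. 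Thus it remains to compute $\det(\widehat n)$ in terms of $\det(A)$ and the face volumes $A_{\widehat i}$.

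The geometric heart is the claim that each dual normal decomposes along the inward face normals of $T$ weighted by the vertex-areas: if $\nu_{\widehat i}$ denotes the inward unit normal to the face $\sigma_{\widehat i}$, then $n_j=\sum_i A_{j,\widehat i}\,\nu_{\widehat i}$. I would prove this by the vector-area (closed-surface) identity applied to the dual cell of vertex $j$ inside $T$: its boundary consists of the outer pieces lying in the faces $\sigma_{\widehat i}$, with signed $(N-1)$-volume $A_{j,\widehat i}$ and outward normal $-\nu_{\widehat i}$, together with the interior walls on the edge-duals $\ell^*_{jk}$, each perpendicular to $[j,k]$ with outward normal $(v_k-v_j)/\ell_{jk}$. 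Because the total vector area of a closed cell vanishes, $\sum_i A_{j,\widehat i}(-\nu_{\widehat i})+\sum_k \ell^*_{jk}(v_k-v_j)/\ell_{jk}=0$, and the second sum is exactly $n_j$ by (\ref{def:n}). A single-vertex check, such as the equilateral triangle with its circumcentric dual, fixes the inward orientation.

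With this decomposition in hand, let $D$ be the diagonal matrix with entries $A_{\widehat i}$ and set $\tilde A=D^{-1}A$, so that $\tilde A$ has zero diagonal, row sums equal to $1$ (hence its columns sum to the all-ones vector), and $\det A=\det\tilde A\cdot\prod_i A_{\widehat i}$. Letting $\widehat P$ be the $N\times(N+1)$ matrix whose columns are the inward area-normals $A_{\widehat i}\nu_{\widehat i}$, the claim reads $\mathbf n=\widehat P\,\tilde A$ for the full normal matrix $\mathbf n$; deleting the zeroth column gives $\widehat n=\widehat P\,\tilde A'$ with $\tilde A'$ of size $(N+1)\times N$. A Cauchy--Binet expansion then writes $\det\widehat n=\sum_k \det(\widehat P^{(\widehat k)})\,M_{k0}$, where $\widehat P^{(\widehat k)}$ drops the $k$th normal and $M_{k0}$ is the $(k,0)$ minor of $\tilde A$. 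Proposition~\ref{prop:detnormal}, applied to $T$ itself, evaluates $\det(\widehat P^{(\widehat k)})=\frac{(-1)^{N+k}N^N}{N!}\Vol(T)^{N-1}$, while $\sum_k(-1)^k M_{k0}$ is the determinant of $\tilde A$ with its zeroth column replaced by the all-ones vector, which equals $\det\tilde A$ precisely because the columns of $\tilde A$ sum to that vector. Collecting these gives $\det\widehat n=\frac{N^N\Vol(T)^{N-1}}{N!}\cdot\frac{\det A}{\prod_i A_{\widehat i}}$, and substituting into the first formula produces (\ref{eq:KandA}).

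I expect the main obstacle to be the signed version of the vector-area identity: when $C_T$ lies outside $T$ the dual cell is not a genuine convex polytope, and several of the $A_{j,\widehat i}$ and the wall contributions are negative, so I must justify the cancellation using the signed orthoscheme decomposition of Section~\ref{sec:dualnd} rather than a naive polytope argument. The remaining work---the factorization, the Cauchy--Binet expansion, and the column-replacement collapse---is routine linear algebra. The only other care needed is the bookkeeping of the overall sign and the power of $(-1)$, which must be reconciled with the orientation conventions in Proposition~\ref{prop:detnormal} and in the first formula so as to land on the stated $(-1)^N$.
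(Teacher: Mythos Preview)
Your plan is essentially the paper's own proof: the paper also writes $-Lv^T=n^T$, reduces via $v_0=0$ to get $\det(\widehat n)=(-1)^N N!\,\Vol(T)\,K^*(T)$, and then obtains (\ref{eq:KandA}) from the same closed-cell (hexahedron) normal decomposition $n_i=\sum_j \frac{A_{i,\widehat j}}{A_{\widehat j}}\,n_{\widehat j}$, the only cosmetic difference being that the paper augments with an extra ``area'' column to make the matrices square and takes a single determinant, whereas you use Cauchy--Binet plus the column-sum collapse. The one omission in your outline is (\ref{eq:keqVoverV}), which the paper gets immediately by applying Proposition~\ref{prop:detnormal} to $T^{\#}$ (since the $n_i$ are by definition its face normals), so you should insert that one-line step.
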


We first consider the case of three dimensions. The two dimensional case is relatively simple and is described in \cite{glickenstein3}, so it is skipped here. We will discuss it more in Section \ref{sec:simson}.

\subsection{Three dimensions}
Refer to Figure \ref{fig:dualitytetra}.
The key observation is that 
\[
    n_{ij}=-n_{ji}=\frac{ \ell^*_{ij}}{\ell_{ij}} (v_j-v_i)
\]
is a vector that is normal to the dual face with length equal to the area of that dual face.
We then define 
\[
    n_i = -(n_{ij}+n_{ik}+n_{i\ell})
\]
for $\{i,j,k,\ell\}=\{0,1,2,3\}$ denoting the vertices of the tetrahedron.

Notice that 
\[
    n_0 + n_1 +n_2 +n_3 = 0
\]
and so it follows from Minkowski's Theorem (see, e.g., \cite[Chapter 7]{alexandrov}) that these four vectors determine the faces of a tetrahedron $T^{\#}$ with areas equal to the lengths of the vectors and with those vectors normal to the faces. Notice the following relationship between the vertices of the tetrahedron, the Laplacian matrix, and the normals.
\[
n_{i}=-\sum\frac{\ell_{ij}^{\ast}}{\ell_{ij}}\left(
v_{j}-v_{i}\right).
\]%
We will express this in matrix form as $-Lv^T=n^T$,
where $v$ has columns $v_0$, $v_1$, $v_2$, and $v_3$ and $n$ has columns $n_0$, $n_1$, $n_2$, and $n_3$.
%
We now calculate $K^*(T)= \det (-\widehat{L}_{00})$.

For any matrix $M$ let $\widehat{M}$ denote the matrix $M$ with the first row removed. We then use the
Binet-Cauchy formula and the fact that $K^*\left(  T\right)  =\left(  -1\right)^{i+j}\det (-\widehat{L}_{ij})$ for any choice of $i$ and $j$ to see that

\[\det\left(  -\widehat{L}v^T\right) = K^*\left(  T\right) \left( \det\left[ \begin{array}[c]{c}
    v_1^T \\ v_2^T \\ v_3^T \end{array} 
    \right]
    - \det\left[ \begin{array}[c]{c}
    v_0^T \\ v_2^T \\ v_3^T \end{array} 
    \right]
    + \det\left[ \begin{array}[c]{c}
    v_0^T \\ v_1^T \\ v_3^T \end{array} 
    \right]
    - \det\left[ \begin{array}[c]{c}
    v_0^T \\ v_1^T \\ v_2^T \end{array} 
    \right] \right)
.\]
To compute the alternating sum of determinants on the right hand side, we  add and subtract $v_0^T$ from each row of the first term and then expand.
Thus, we have
\[ \det\left( - \widehat{L}v^T\right) = K^*\left(  T\right) \left( \det\left[ \begin{array}[c]{c}
    v_1^T - v_0^T \\ v_2^T - v_0^T \\ v_3^T - v_0^T \end{array} 
    \right]
    \right) =-6K^*\left(  T\right)  \Vol\left(  T\right)
\]
from (\ref{eq:volume1}) and
\[
\det\left(\widehat{n^T}\right)=-\frac{9}{2}\Vol\left(  T^{\#}\right)  ^{2}%
\]
by (\ref{eq:volfromnormals}) in Proposition \ref{prop:detnormal}. It follows from $-\widehat{L}v^T = \widehat{n^T}$ that
\begin{equation}
K^*\left(  T\right)  = \frac{3\Vol\left(  T^{\#}\right)  ^{2}}{4\Vol\left(
T\right)  }. \label{detformula}
\end{equation}

For the last part of the theorem, we observe that there is a hexahedron determined by the vertices $v_0, c_{01}, c_{02}, c_{03}, c_{012}, c_{013}, c_{023}$, and $c_{0123}$ (see Figure \ref{fig:hexahedron}). 
\begin{figure}[t]
    \centering
    \includegraphics[width=.9\textwidth]{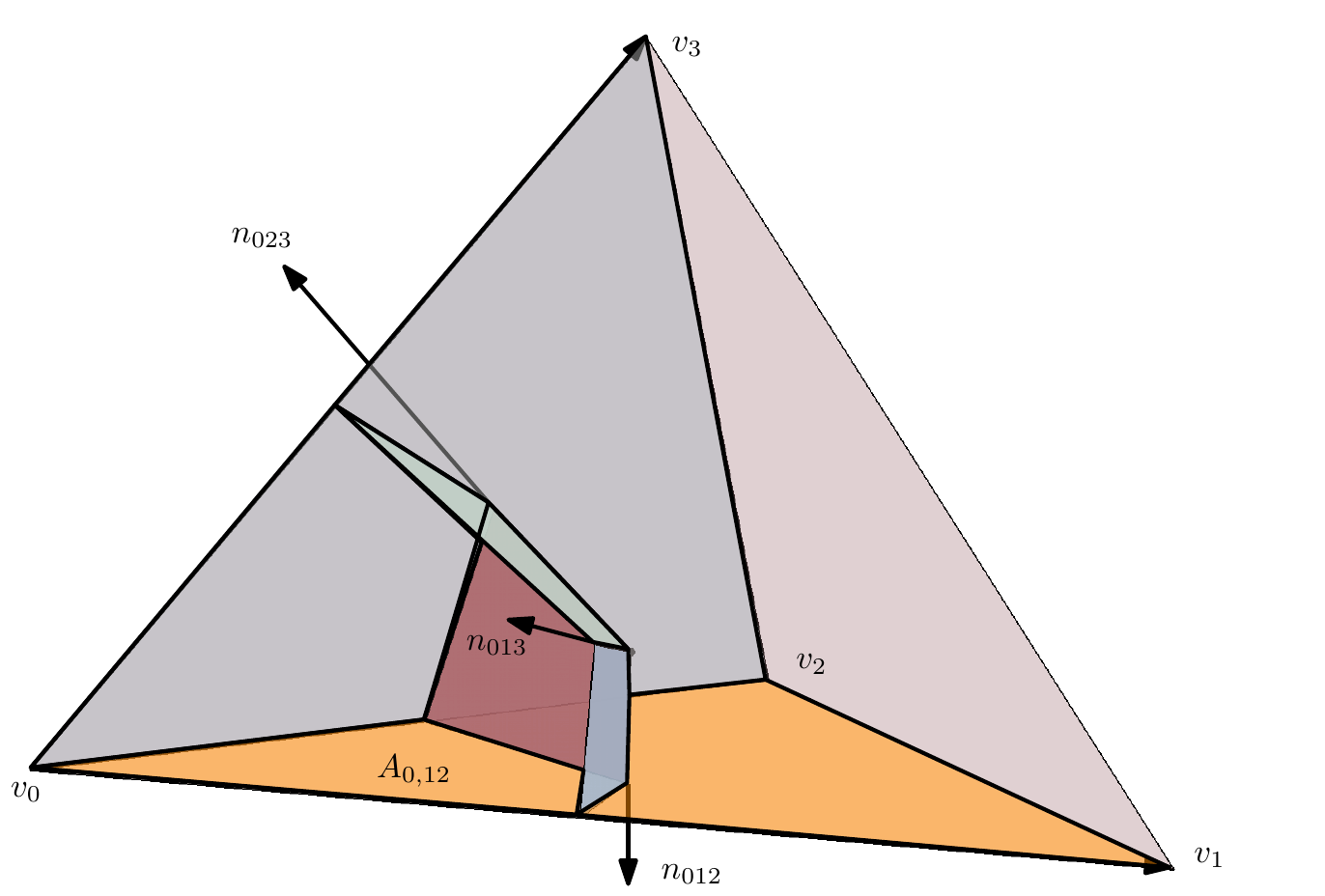}
    \caption{Around the bottom left vertex we see the hexahedron separating the piece of volume associated to vertex $0$ and some of the relevant normal vectors and areas.}
    \label{fig:hexahedron}
\end{figure}
Further, $n_0$, as defined, is equal to the 
sum of three face normals directed into this hexahedron, $n_{01}, n_{02}, n_{03}$. The other face normals are $\frac{A_{0,12}}{A_{012}}n_{012}$, $\frac{A_{0,13}}{A_{013}}n_{013}$, and $\frac{A_{0,23}}{A_{023}}n_{023}$, where $n_{ijk}$ are the face normals to the faces of the tetrahedron (with length equal to the area of the face) and $A_{i,jk}$ is the area from $[i,j,k]$ associated to vertex $i$ as described in Section \ref{sec:dualnd}. Since the hexahedron is a polyhedron, we have that 
\begin{equation}\label{eq:vertexnormalfromfacenorms}
n_0 = 
\frac{A_{0,12}}{A_{012}}n_{012} +\frac{A_{0,13}}{A_{013}}n_{013}+ \frac{A_{0,23}}{A_{023}}n_{023} 
.
\end{equation}
We have similar formulas for the other vertices.

It now follows that 
\[
\left[
\begin{array}[c]{cccc}%
0  & \frac{A_{0,23}}{A_{023}} & \frac{A_{0,13}}{A_{013}} & \frac{A_{0,12}}{A_{012}}\\
\frac{A_{1,23}}{A_{123}} & 0 & \frac{A_{1,03}}{A_{013}}  & \frac{A_{1,02}}{A_{012}}\\
\frac{A_{2,13}}{A_{123}}  & \frac{A_{2,03}}{A_{023}} & 0 & \frac{A_{2,01}}{A_{012}}\\
\frac{A_{3,12}}{A_{123}} & \frac{A_{3,02}}{A_{023}} & \frac{A_{3,01}}{A_{013}} & 0 %
\end{array}
\right]  \left[
\begin{array}
[c]{cc}%
n_{123} & A_{123}\\
n_{023} & A_{023}\\
n_{013} & A_{013}\\
n_{012} & A_{012}%
\end{array}
\right]  =\left[
\begin{array}
[c]{cc}%
n_{0} & A_{0}\\
n_{1} & A_{1}\\
n_{2} & A_{2}\\
n_{3} & A_{3}%
\end{array}
\right].
\]
Taking the determinant and using that the determinant is multilinear, we get that
\begin{align}
    -\frac{9 \Vol(T)^2 \det(A)}{2A_{123}A_{023}A_{013}A_{012}} =\det\left(\widehat{n^T}\right)=-6K^*(T) \Vol(T).
\label{eq:Knormal}
\end{align}

\subsection{Arbitrary Dimensions}
In this section, we give the argument for $N$ dimensions. We still have $-Lv^T=n^T$. Let $V_{\widehat{j}}$ denote $v^T$ with the $j$th row removed. Applying the Binet-Cauchy formula to $-\widehat{L}v^T=\widehat{n}^T$, we obtain

\[
\det\left(-\widehat{L}v^T\right) = \sum_{j = 0}^N \det\left(-\widehat{L}_{0j}\right) \det\left(V_{\widehat{j}}\right).
\]

When computing $\det\left(-\widehat{L}v^T\right)$, it is convenient to assume $v_0 = \vec{0}$ so all but the first term in this sum is $0$. This assumption can be made without loss of generality because if $C$ is the $(N+1)$ x $N$ matrix with all rows equal to $v_0$, then $\widehat{L}v^T = \widehat{L}\left(v^T - C\right)$, and so $\det\left(\widehat{L}v^T\right) = \det\left(\widehat{L}\left(v^T - C\right)\right)$. With this simplification, we obtain

\begin{align}
\det\left(-\widehat{L}v^T\right) = K^*\left(  T\right)\det\left(\widehat{v^T}\right). \label{eq:detvol}
\end{align}

But since $-\widehat{L}v^T=\widehat{n^T}$, the determinant on the left hand side is given by Proposition 2:
\[
\det\left(-\widehat{L}v^T\right) = \det\left(\widehat{n^T}\right) = \frac{(-1)^{N}N^{N}}{N!}\Vol(T^{\#})^{N-1}.
\]
Also, since $v_0 = \vec{0}$, we have that
\[
\det\left(\widehat{v^T}\right) = \det \left[ \begin{array}[c]{ccccc} 
    v_1-v_0 & v_2-v_0 & \cdots & v_N-v_0  \end{array} \right] = (-1)^N N! \Vol(T).
\]
Thus,
\begin{align}
    (-1)^N N! \Vol(T) K^*\left(  T\right) = \det\left(\widehat{n^T}\right) = \frac{(-1)^{N}N^{N}}{N!}\Vol(T^{\#})^{N-1}
\end{align}
or
\[
K^*\left(  T\right) = \frac{N^{N}\Vol(T^{\#})^{N-1}}{\left(N!\right)^2\Vol(T)}.
\]
Now (\ref{eq:KandA}) follows from an argument similar to the derivation of (\ref{eq:Knormal}). We note that 
\begin{align}
\left[
\begin{array}[c]{cccc}%
0  & \frac{A_{0,\widehat{1}}}{A_{\widehat{1}}} & \cdots & \frac{A_{0,\widehat{N}}}{A_{\widehat{N}}}\\
\frac{A_{1,\widehat{0}}}{A_{\widehat{0}}} & 0 & \cdots  & \frac{A_{1,\widehat{N}}}{A_{\widehat{N}}}\\
\vdots  & \ddots & \ddots & \vdots\\
\frac{A_{N,\widehat{0}}}{A_{\widehat{0}}} & \cdots & \frac{A_{N,\widehat{N-1}}}{A_{\widehat{N-1}}} & 0 %
\end{array}
\right]  \left[
\begin{array}
[c]{cc}%
n_{\widehat{0}} & A_{\widehat{0}}\\
n_{\widehat{1}} & A_{\widehat{1}}\\
\vdots & \vdots\\
n_{\widehat{N}} & A_{\widehat{N}}%
\end{array}
\right]  =\left[
\begin{array}
[c]{cc}%
n_{0} & A_{0}\\
n_{1} & A_{1}\\
\vdots & \vdots\\
n_{N} & A_{N}%
\end{array}
\right].
\end{align}
The result then follows from taking the determinant of both sides and using the multilinearity of the determinant, resulting in 
\begin{align*}
    \frac{\det(A)}{\prod_{i=0}^N A_{\widehat{i}}} \frac{ N^{N}}{N!}\Vol(T)^{N-1} =\det\left(\widehat{n^T}\right) = (-1)^N N! \Vol(T) K^*(T).
\end{align*}

\subsection{Remarks on \texorpdfstring{$T^\#$}{TEXT}}

In dimension $2$, we can easily see that $T^\#$ is the pedal triangle of the center of the original triangle. We will discuss this more in Section \ref{sec:simson}. As of yet, other than in dimension $2$ we do not know how the simplex $T^\#$ can be properly placed in the original tetrahedron $T$ or have any direct connections other than its definition. However, it is possible to construct $T^\#$ directly without resorting to Minkowski's Theorem in its generality. First compute its volume using (\ref{eq:volfromnormals}), then set the origin at a vertex, then use (\ref{eq:normalvertexinverse}) and invert the matrix on the left to get the matrix of vertices.

\section{Definiteness of Laplacian}

Proving conditions under which the Laplacian necessarily has nonzero determinant is of considerable interest. It is well-known that if the edge duals are positive then the Laplacian is an M-matrix and the definiteness follows directly. However, even in rudimentary cases such as the Laplacian for a non-Delaunay triangulation, it can be the case that the Laplacian is not an M-matrix but is still necessarily definite. In this section, we review what is known for the case of two dimensions and three-dimensional circle packings and describe the consequences of Theorem \ref{thm:determinant}.

\subsection{Triangles in two dimensions}\label{sec:simson}

The triangle case for (\ref{eq:keqVoverV}) was found in \cite{glickenstein3}. It can be seen that in dimension two, the triangle $T^\#$ is exactly the pedal triangle of the center point $C_{[i,j,k]}$. 
Much is known about the geometry of the pedal triangle, for instance \cite{johnsongeometry60,gallatly1910modern}. In particular, the triangle is degenerate (which is equivalent to having zero area) exactly when the center is on the circumcircle due to the Wallace-Simson Theorem (see \cite[pp. 137-138]{johnsongeometry60} for an interesting historical note). It is a direct consequence that the finite volume Laplacians arising from finite elements in two dimensions and from circle packings result have maximal rank since the centers are the circumcenter in the former and the incenter in the latter, both inside the circumcircle.

It is also known that if one takes an inversion of the triangle with respect to a circle centered at $C_{[i,j,k]}$ then the pedal triangle is similar to the inversion of the triangle itself. It follows that the pedal triangle is degenerate if the center $C_{[i,j,k]}$ lies on the circumcircle of the inversion of the triangle (and hence on the inversion of the circumcircle).

\subsection{Sphere packing in three dimensions}
Sphere packing considers tetrahedra arising from the centers of four mutually tangent spheres. Such a tetrahedron can equivalently be described as a tetrahedron with a sphere tangent to all of its six edges, sometimes called a circumscriptible tetrahedron \cite{court50}.
A version of Theorem \ref{thm:determinant} was proved for sphere packing in \cite[Proposition A1]{glickenstein2}, which shows that 
\begin{align*}
    K^*(T) &= \frac{36 r_i r_j r_k r_\ell V_{ijk\ell}}{P_{ijk}P_{ij\ell}P_{ik\ell}P_{jk\ell}}\\
\end{align*}
using \cite[Lemma 3]{glickenstein1}. Note that there is a slight error in the constant in \cite{glickenstein2}.



We recall from \cite{glickenstein1} that in the sphere packing case, 
\begin{align}
    A_{i,jk}= r_i r_{ijk} = \frac{2 r_i A_{ijk}}{P_{ijk}}
\label{eq:spherepackingarea}
\end{align}
where $r_{ijk}$ is the radius of the circle inscribed in triangle $ijk$ and $P_{ijk}$ is the perimeter of the triangle $ijk$. We can now use (\ref{eq:KandA}). First note that using the formula (\ref{eq:spherepackingarea}) and multilinearity of the determinant, in this case we have
\begin{align*}
    \det A &= \frac{16r_1r_2r_3r_4A_{123}A_{023}A_{013}A_{012}}{P_{123}P_{023}P_{013}P_{012}}  \det \left[\begin{array}
[c]{cccc}%
0 & 1 & 1 & 1\\
1 & 0 & 1 & 1\\
1 & 1 & 0 & 1\\
1 & 1 & 1 & 0
\end{array} \right]\\ 
&= -\frac{48r_1r_2r_3r_4A_{123}A_{023}A_{013}A_{012}}{P_{123}P_{023}P_{013}P_{012}}  .
\end{align*}
It then follows from Theorem \ref{thm:determinant} that 
\[
    K^*(T)=\frac{36r_1r_2r_3r_4\Vol(T)}{P_{123}P_{023}P_{013}P_{012}}.
\]

\subsection{General case}
In general, we can see the following: 

\begin{theorem}
Given a Euclidean $N$-simplex $\sigma$, the locus $L$ of points for which the center results in a degenerate simplex $T^{\#}$ will satisfy the following:
\begin{enumerate}
    \item The vertices of the simplex lie on $L$.
    \item The interior of the simplex does not intersect $L$.
    \item $L$ is described by the zeroes of a polynomial in $N$ variables of degree $N(N-1)$. 
\end{enumerate}
\end{theorem}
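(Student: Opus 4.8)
The plan is to treat the center $p=C_T$ as a free point of $\R^N$ and to study the single polynomial $D(p):=\det\widehat n(p)$ attached to $p$. By Theorem \ref{thm:determinant} we have $\det\widehat n=(-1)^N N!\,\Vol(T)\,K^*(T)=\frac{(-1)^N N^N}{N!}\Vol(T^\#)^{N-1}$, so for our fixed nondegenerate $T$ the dual simplex $T^\#$ is degenerate exactly when $D(p)=0$; thus $L=\{p:D(p)=0\}$. The one structural fact underlying all three parts is that each orthogonal projection $C_\sigma$ is an \emph{affine} function of $p$, hence each signed gap $\ell^*(\sigma_n\subset\sigma_{n+1})$ is affine in $p$, and therefore each dual volume $\ell^*_{ij}=V^*([i,j]\subset T)$, being a sum of products of $N-1$ such gaps, is a polynomial in $p$ of degree at most $N-1$. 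Consequently every entry of $\widehat n$ has degree at most $N-1$ and $D$ has degree at most $N(N-1)$.

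For (1) I would evaluate at a vertex. When $p=v_m$, every simplex $\sigma$ containing $v_m$ satisfies $C_\sigma=v_m$, since the projection of a point already in the affine hull of $\sigma$ is itself; hence every gap in every chain along an edge through $v_m$ vanishes, so $\ell^*_{mj}(v_m)=0$ for all $j$ and the normal $n_m(v_m)=\sum_j\frac{\ell^*_{mj}}{\ell_{mj}}(v_j-v_m)$ is zero. If $m\ne0$ this is a zero column of $\widehat n$; if $m=0$ the relation $\sum_i n_i=0$ forces the remaining columns to be dependent. Either way $D(v_m)=0$, so every vertex lies on $L$. For (3) I would show the degree bound $N(N-1)$ is attained: the top-degree part of $D$ is $\det[n_1^{\mathrm{top}},\dots,n_N^{\mathrm{top}}]$, where $n_i^{\mathrm{top}}$ is the vector of degree-$(N-1)$ homogeneous parts of $n_i$, and this is homogeneous of degree $N(N-1)$, so it suffices to check it is not identically zero. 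I would do this by a limiting-shape argument: as $p\to\infty$ along a generic direction the face areas $\norm{n_i}$ grow like $\norm{p}^{N-1}$ while the rescaled normals converge to a nondegenerate configuration, so $\Vol(T^\#)$ grows like $\norm{p}^{N}$ and $\det\widehat n\asymp\Vol(T^\#)^{N-1}\asymp\norm{p}^{N(N-1)}$; equivalently, evaluate the leading polynomial at one generic $p$ and check nonvanishing. Claims (1) and (3) I expect to be routine.

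The main obstacle is (2): showing $D\ne0$ on the open interior, i.e.\ that the finite-volume Laplacian has full rank $N$ (kernel exactly the constants) for interior $p$. Since every function on the $N+1$ vertices is the restriction of an affine function, $T^\#$ is degenerate iff some nonconstant $g_k=a\cdot v_k$ lies in $\ker L$, i.e.\ iff $a\cdot n_i=0$ for all $i$ for some $a\ne0$. Pulling the reduced form of $-L$ back along $a\mapsto(a\cdot v_k)$ identifies $-\widehat L_{00}$, up to congruence, with the quadratic form $Q(a)=a^\top\widetilde G\,a$, where $\widetilde G=\sum_{i<j}\ell^*_{ij}\ell_{ij}\,\hat e_{ij}\hat e_{ij}^\top$ and $\hat e_{ij}$ is the unit edge direction; thus (2) is precisely the statement that $\widetilde G$ is nonsingular for interior $p$ (alternatively, via the matrix identity used to derive (\ref{eq:KandA}), that the unit-row-sum matrix $M$ with $M_{ij}=A_{j,\widehat i}/A_{\widehat i}$ is nonsingular).

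The natural attempt—arguing that every edge dual volume $\ell^*_{ij}$ is positive when $p$ is interior, which would make $Q$ a positive combination of rank-one forms whose directions span $\R^N$, hence $\widetilde G\succ0$—\emph{fails} in dimension $\ge3$: the projection of an interior point onto a face can land outside that face, rendering $\ell^*_{ij}$ negative (for example as $p$ approaches a vertex over a sufficiently obtuse simplex, where one of the two orthoscheme contributions to the edge dual area becomes negative). I therefore expect the real content of (2) to be a geometric positivity (or at least nonsingularity) argument for $\widetilde G$ that survives these sign changes—plausibly through the dual-cell decomposition together with a divergence/Stokes identity, or by induction on dimension over the faces—crucially using that the degeneration is confined to the boundary, consistent with $L$ passing through the vertices while avoiding the interior, exactly as the circumcircle does in the two-dimensional case of Section \ref{sec:simson}.
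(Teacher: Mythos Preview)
Your arguments for (1) and (3) are exactly the paper's: at a vertex $v_m$ every $n_{mj}$ vanishes so a column of $\widehat n$ (or the relation $\sum_i n_i=0$) kills the determinant, and each projection $C_\sigma$ is affine in $p$, so each $n_{ij}$ has degree $N-1$ and $D=\det\widehat n$ has degree $N(N-1)$. The paper stops at the degree bound and does not argue that it is attained; your limiting-direction sketch is an extra step rather than a point of divergence.

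For (2) the paper's proof is a single line: it asserts that for every center in the open interior of $T$ the dual edge volumes $\ell^*_{ij}$ are positive, so $L$ is an ordinary positively-weighted graph Laplacian on the complete graph and $K^*(T)\neq0$. That is precisely the route you considered and rejected. Your objection is well taken in dimension $\ge3$: take a tetrahedron with an obtuse dihedral angle along $[i,j]$ and let $C_T$ be interior but close to the vertex $v_\ell$ opposite the face $[i,j,k]$; then $\ell^*([i,j,\ell]\subset T)\to 0$ while the projection $C_{[i,j,k]}$ lands on the wrong side of $[i,j]$, so $\ell^*([i,j]\subset[i,j,k])<0$ and hence $A^*([i,j])<0$. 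Thus the paper's positivity assertion, and with it its proof of (2), does not cover simplices with obtuse dihedral angles; you have correctly isolated the difficulty rather than missed the intended trick. Your proposal, of course, leaves (2) open as well: turning the nonsingularity of $\widetilde G$ (equivalently of the row-stochastic matrix $M$ with entries $A_{j,\widehat i}/A_{\widehat i}$) into a proof that survives sign changes in the $\ell^*_{ij}$ is the missing ingredient on both sides.
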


\begin{proof}
If the center is at vertex $v_i$ then the normals $n_{ij}$ would be zero for all $j$, implying the first statement. The second follows from the fact that the dual areas are positive for each point in the interior of the simplex, resulting in a standard weighted graph Laplacian on a connected graph, which cannot have $K^*(T) = 0$.

To derive the third, we simply note that given a point $x \in \R^N$, the projections to each of the sub-simplices are linear. Hence the normals $n_{ij}$ defined in (\ref{def:n}) are polynomial of degree $N-1$. The volume of the simplex $T^{\#}$ is zero if and only if $K^*(T)=0$, which is true if and only if the determinant of the matrix of normals is zero according to (\ref{eq:detvol}). This determinant is a polynomial of degree $N(N-1)$.
\end{proof}

\begin{figure}
    \centering
    \includegraphics[width=.8\textwidth]{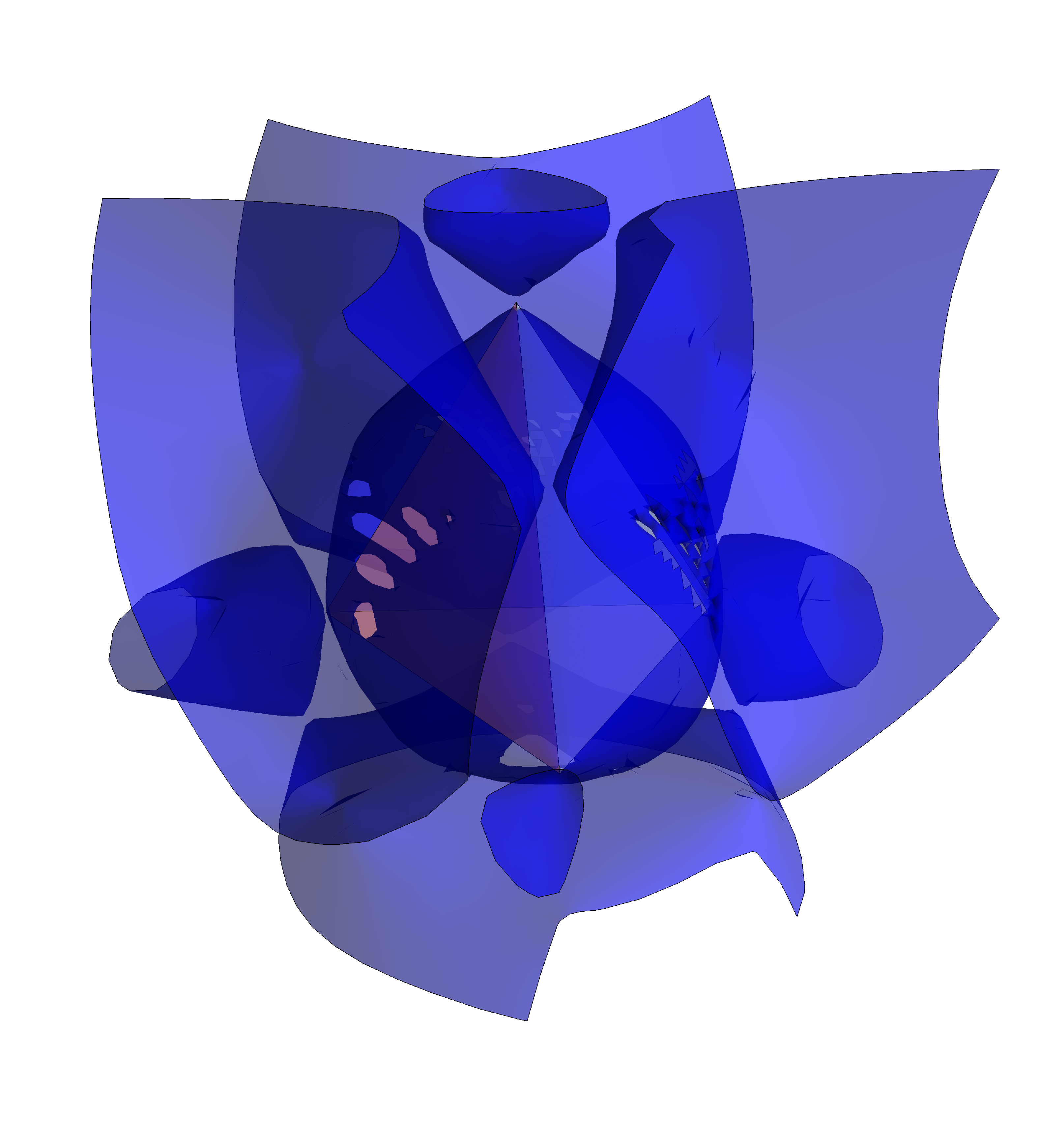}
    \caption{Locus of zeroes for the Laplacian for the tetrahedron shown.}
    \label{fig:sextic}
\end{figure}
An example of the equation for the locus for the regular tetrahedron in $\R^3$ is given by 
\begin{align*}
  &  \frac{x^6}{72}-\frac{x^5}{12}-\frac{23 x^4 y^2}{648}-\frac{55 x^4 y z}{162 \sqrt{2}}-\frac{x^4 y}{18 \sqrt{3}}\\
   & +\frac{4 x^4 z^2}{81}-\frac{x^4 z}{18
   \sqrt{6}}+\frac{5 x^4}{36}+\frac{23 x^3 y^2}{162}+\frac{55}{81} \sqrt{2} x^3 y z+\frac{2 x^3 y}{9 \sqrt{3}}-\frac{16 x^3 z^2}{81}\\
   &+\frac{1}{9}
   \sqrt{\frac{2}{3}} x^3 z+\frac{181 x^2 y^4}{1944}-\frac{55 x^2 y^3 z}{243 \sqrt{2}}-\frac{37 x^2 y^3}{81 \sqrt{3}}\\
   &+\frac{8}{81} x^2 y^2 z^2-\frac{x^2 y^2
   z}{9 \sqrt{6}}-\frac{23 x^2 y^2}{162}-\frac{10}{243} \sqrt{2} x^2 y z^3-\frac{10 x^2 y z^2}{27 \sqrt{3}}-\frac{55}{81} \sqrt{2} x^2 y z\\
   &+\frac{32 x^2
   z^4}{243}-\frac{22}{81} \sqrt{\frac{2}{3}} x^2 z^3+\frac{16 x^2 z^2}{81}-\frac{x^2}{9}-\frac{181 x y^4}{972}+\frac{55}{243} \sqrt{2} x y^3 z\\
   &+\frac{74 x
   y^3}{81 \sqrt{3}}-\frac{16}{81} x y^2 z^2+\frac{1}{9} \sqrt{\frac{2}{3}} x y^2 z+\frac{20}{243} \sqrt{2} x y z^3+\frac{20 x y z^2}{27 \sqrt{3}}\\
   &-\frac{4 x
   y}{9 \sqrt{3}}-\frac{64 x z^4}{243}+\frac{44}{81} \sqrt{\frac{2}{3}} x z^3-\frac{2}{9} \sqrt{\frac{2}{3}} x z+\frac{31 y^6}{5832}+\frac{55 y^5 z}{486
   \sqrt{2}}\\
   &+\frac{5 y^5}{486 \sqrt{3}}+\frac{4 y^4 z^2}{81}-\frac{467 y^4 z}{486 \sqrt{6}}-\frac{49 y^4}{972}+\frac{10}{729} \sqrt{2} y^3 z^3-\frac{34 y^3
   z^2}{243 \sqrt{3}}+\frac{67}{243} \sqrt{2} y^3 z\\
   &+\frac{32 y^2 z^4}{243}
   -\frac{86}{243} \sqrt{\frac{2}{3}} y^2 z^3-\frac{4 y^2
   z^2}{81}+\frac{y^2}{27}-\frac{64 y z^4}{243 \sqrt{3}}+\frac{44}{243} \sqrt{2} y z^3-\frac{2}{27} \sqrt{2} y z\\
   &-\frac{8 z^6}{729}+\frac{32}{243}
   \sqrt{\frac{2}{3}} z^5
   -\frac{40 z^4}{243}+\frac{2 z^2}{27}=0.
\end{align*}
A picture of the solution set for the regular tetrahedron is given in Figure \ref{fig:sextic}. In two dimensions,  of the locus is the zeroes of a quadratic polynomial in two variables that contains each of the triangle vertices. By looking at the symmetry of the polynomial, one can deduce that it is the circumcircle. We finally note that in the case of a tetrahedron, the locus is not the same as the locus of points such that the pedal tetrahedron is degenerate, which is given by a Cayley cubic surface \cite{court50} (also called the Steiner cubic surface \cite{steinercubic}). See also \cite{roanes, botana, roanes12}.



\

\bibliographystyle{alpha}
\bibliography{biblio}

\begin{thebibliography}{DHLM05}

\bibitem[Ale05]{alexandrov}
A.~D. Alexandrov.
\newblock {\em Convex polyhedra}.
\newblock Springer Monographs in Mathematics. Springer-Verlag, Berlin, 2005.
\newblock Translated from the 1950 Russian edition by N. S. Dairbekov, S. S.
  Kutateladze and A. B. Sossinsky, With comments and bibliography by V. A.
  Zalgaller and appendices by L. A. Shor and Yu. A. Volkov.

\bibitem[Bol98]{bollobas}
B\'{e}la Bollob\'{a}s.
\newblock {\em Modern graph theory}, volume 184 of {\em Graduate Texts in
  Mathematics}.
\newblock Springer-Verlag, New York, 1998.

\bibitem[Bot03]{botana}
Francisco Botana.
\newblock Automatic determination of algebraic surfaces as loci of points.
\newblock In Peter M.~A. Sloot, David Abramson, Alexander~V. Bogdanov, Jack~J.
  Dongarra, Albert~Y. Zomaya, and Yuriy~E. Gorbachev, editors, {\em
  Computational Science --- ICCS 2003}, pages 879--886, Berlin, Heidelberg,
  2003. Springer Berlin Heidelberg.

\bibitem[BR87]{bankrose}
Randolph~E. Bank and Donald~J. Rose.
\newblock Some error estimates for the box method.
\newblock {\em SIAM J. Numer. Anal.}, 24(4):777--787, 1987.

\bibitem[Chu97]{chung}
Fan R.~K. Chung.
\newblock {\em Spectral graph theory}, volume~92 of {\em CBMS Regional
  Conference Series in Mathematics}.
\newblock Published for the Conference Board of the Mathematical Sciences,
  Washington, DC; by the American Mathematical Society, Providence, RI, 1997.

\bibitem[CL03]{chowluo}
Bennett Chow and Feng Luo.
\newblock Combinatorial {R}icci flows on surfaces.
\newblock {\em J. Differential Geom.}, 63(1):97--129, 2003.

\bibitem[Cou50]{court50}
N.~A. Court.
\newblock Semi-inverse tetrahedrons.
\newblock {\em Duke Math. J.}, 17:75--81, 1950.

\bibitem[DHLM05]{desbrunetal2005}
Mathieu Desbrun, Anil~N. Hirani, Melvin Leok, and Jerrold~E. Marsden.
\newblock Discrete exterior calculus, 2005.

\bibitem[DKM09]{duvaletal2009}
Art~M. Duval, Caroline~J. Klivans, and Jeremy~L. Martin.
\newblock Simplicial matrix-tree theorems.
\newblock {\em Transactions of the American Mathematical Society},
  361(11):6073--6114, jun 2009.

\bibitem[Gal10]{gallatly1910modern}
W.~Gallatly.
\newblock {\em The Modern Geometry of the Triangle}.
\newblock F. Hodgson, 1910.

\bibitem[Gli05a]{glickenstein1}
David Glickenstein.
\newblock A combinatorial {Y}amabe flow in three dimensions.
\newblock {\em Topology}, 44(4):791--808, 2005.

\bibitem[Gli05b]{glickenstein2005geometric}
David Glickenstein.
\newblock Geometric triangulations and discrete {L}aplacians on manifolds.
\newblock {\em arXiv:math/0508188}, 2005.

\bibitem[Gli05c]{glickenstein2}
David Glickenstein.
\newblock A maximum principle for combinatorial {Y}amabe flow.
\newblock {\em Topology}, 44(4):809--825, 2005.

\bibitem[Gli07]{glickenstein3}
David Glickenstein.
\newblock A monotonicity property for weighted {D}elaunay triangulations.
\newblock {\em Discrete Comput. Geom.}, 38(4):651--664, 2007.

\bibitem[Gli11]{glickenstein4}
David Glickenstein.
\newblock Discrete conformal variations and scalar curvature on piecewise flat
  two- and three-dimensional manifolds.
\newblock {\em J. Differential Geom.}, 87(2):201--237, 2011.

\bibitem[GT17]{glickensteinthomas}
David Glickenstein and Joseph Thomas.
\newblock Duality structures and discrete conformal variations of piecewise
  constant curvature surfaces.
\newblock {\em Adv. Math.}, 320:250--278, 2017.

\bibitem[Hat00]{hatcher}
Allen Hatcher.
\newblock {\em {Algebraic topology}}.
\newblock Cambridge Univ. Press, Cambridge, 2000.

\bibitem[He99]{he}
Zheng-Xu He.
\newblock Rigidity of infinite disk patterns.
\newblock {\em Ann. of Math. (2)}, 149(1):1--33, 1999.

\bibitem[Hir03]{hirani2003}
Anil~N. Hirani.
\newblock {\em Discrete Exteriror Calculus}.
\newblock dissertation, California Institute of Technology, 2003.

\bibitem[HX19]{hexu}
Xiaokai He and Xu~Xu.
\newblock Thurston's sphere packings on 3-dimensional manifolds, {I}, 2019.

\bibitem[Joh60]{johnsongeometry60}
Roger~A. Johnson.
\newblock {\em Advanced {E}uclidean geometry: {A}n elementary treatise on the
  geometry of the triangle and the circle}.
\newblock Under the editorship of John Wesley Young. Dover Publications, Inc.,
  New York, 1960.

\bibitem[LCW00]{lichenwu}
Ronghua Li, Zhongying Chen, and Wei Wu.
\newblock {\em Generalized difference methods for differential equations},
  volume 226 of {\em Monographs and Textbooks in Pure and Applied Mathematics}.
\newblock Marcel Dekker, Inc., New York, 2000.
\newblock Numerical analysis of finite volume methods.

\bibitem[Mur24]{steinercubic}
F.D. Murnaghan.
\newblock Problems and solutions, problem 3010.
\newblock {\em The American Mathematical Monthly}, 31(4):206--209, 1924.

\bibitem[RMRL01]{roanes}
Eugenio Roanes-Mac\'{\i}as and Eugenio Roanes-Lozano.
\newblock Automatic determination of geometric loci. 3{D}-extension of
  {S}imson-{S}teiner theorem.
\newblock In {\em Artificial intelligence and symbolic computation ({M}adrid,
  2000)}, volume 1930 of {\em Lecture Notes in Comput. Sci.}, pages 157--173.
  Springer, Berlin, 2001.

\bibitem[RMRL12]{roanes12}
Eugenio Roanes~Mac\'{\i}as and Eugenio Roanes~Lozano.
\newblock An unexpected 3{D} geometric locus.
\newblock In {\em Mathematical contributions in honor of {J}uan {T}arr\'{e}s
  ({S}panish)}, pages 317--326. Univ. Complut. Madrid, Fac. Cien. Mat., Madrid,
  2012.

\bibitem[Sch14]{schneiderconvexbodies}
Rolf Schneider.
\newblock {\em Convex bodies: the {B}runn-{M}inkowski theory}, volume 151 of
  {\em Encyclopedia of Mathematics and its Applications}.
\newblock Cambridge University Press, Cambridge, expanded edition, 2014.

\bibitem[Sch20]{Schneiderpolytopes2020}
Rolf Schneider.
\newblock On a formula for the volume of polytopes.
\newblock In Bo'az Klartag and Emanuel Milman, editors, {\em Geometric Aspects
  of Functional Analysis: Israel Seminar (GAFA) 2017-2019 Volume II}, pages
  335--345. Springer International Publishing, Cham, 2020.

\bibitem[Ste66]{stein66}
P.~Stein.
\newblock A note on the volume of a simplex.
\newblock {\em The American Mathematical Monthly}, 73(3):299--301, 1966.

\bibitem[Thu97]{Thurston97}
William~P. Thurston.
\newblock {\em Three-dimensional geometry and topology. {V}ol. 1}, volume~35 of
  {\em Princeton Mathematical Series}.
\newblock Princeton University Press, Princeton, NJ, 1997.
\newblock Edited by Silvio Levy.

\end{thebibliography}
\end{document}